\newtheorem{theorem}{Theorem}[section]
\newtheorem{lemma}[theorem]{Lemma}
\newtheorem{proposition}{Proposition}
\theoremstyle{definition}
\newtheorem{remark}{Remark}
\newcommand{\Rot}{{\mathrm{Rot}}}
\newcommand{\ITM}{{\mathrm{ITM}}}
\newcommand{\TITM}{{\mathrm{TITM}}}
\newcommand{\argmin}{\mathop{{\mathrm{arg\,min}}}\nolimits}
\newcommand{\argmax}{\mathop{{\mathrm{arg\,max}}}\nolimits}
\newcommand{\Tight}{{\mathcal{F}}}
\newcommand{\Leb}{\mathop{{\mathrm{Leb}}}\nolimits}
\newcommand{\bbR}{\mathbb{R}}
\newcommand{\bbN}{\mathbb{N}}
\newcommand{\dd}{\Delta}
\newcommand{\Om}{\Omega}
\newcommand{\Beta}{{B}}
\title[Interval Translation Maps]
      {Almost every Interval Translation Map of three intervals is finite type}
\author[Denis Volk]{}
\subjclass{Primary: 37C05, 37C20, 37C70, 37D20, 37D45.}
 \keywords{Dynamical systems, attractors, interval translation maps, interval exchange maps, double rotations, color rotations.}
 \email{dvolk@kth.se}
\thanks{The author was supported in part by grants RFBR 12-01-31241-mol\_a, President’s of Russia
MK-2790.2011.1, ``Young SISSA Scientists'', and NSF IIS-1018433 (PI Max
Welling, Co-PI Anton Gorodetski).}
\begin{document}
\maketitle

\centerline{\scshape Denis Volk }
\medskip
{\footnotesize
 \centerline{Kungliga Tekniska H\"ogskolan (Royal Institute of Technology)}
   \centerline{Department of mathematics}
   \centerline{SE-100 44, Stockholm, Sweden}
} 

\bigskip


\begin{abstract}
Interval translation maps (ITMs) are a non-invertible generalization of interval exchange transformations (IETs).
The dynamics of finite type ITMs is similar to IETs, while infinite type ITMs are known to exhibit new interesting effects. In this paper, we prove the finiteness conjecture for the ITMs of three intervals. Namely, the subset of ITMs of finite type contains an open, dense, and full Lebesgue measure subset of the space of ITMs of three intervals. For this, we show that any ITM of three intervals can be reduced either to a rotation or to a double rotation.
\end{abstract}

\section{Introduction and main result}  \label{s:intro}

\subsection{Interval translation maps}

Let $\Om \subset \bbR$ be a semi-interval split into $d$ disjoint semi-intervals, $\Om = \sqcup_{j=1}^d \Delta_j$, $\dd_j = [\beta_{j-1}, \beta_j)$. An \emph{interval translation} $T \colon \Om \to \Om$ is a map given by a translation on each of $\Delta_j$, $T|_{\Delta_j} \colon x \mapsto x+\gamma_j$, the vector $(\gamma_1, \dots, \gamma_d)$ is fixed. After normalization~$\Om := [0,1)$, the space~$\ITM(d)$ of $d$ intervals' translations is a convex polytope in~$\bbR^{2d-1}$. We endow it with the Euclidean metric and the Lebesgue measure. An example of an ITM is shown in Figure~\ref{f:itm}. We draw $\Om$ as a horizontal line, the splitting of $\Om$ into $\dd_j$ as arcs of different styles atop of the line, and the images of $\dd_j$ as the mirrored arcs below the line.

\begin{figure}[htp]
\begin{center}
\includegraphics[width=4in]{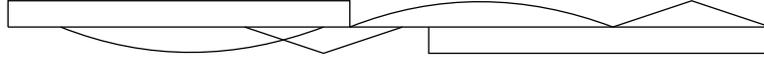}\\
\caption{An interval translation map of $3$ intervals.}
\label{f:itm}
\end{center}
\end{figure}

Interval translation maps (ITMs) were first introduced in 1995 by Boshernitzan and Kornfeld in~\cite{Boshernitzan1995}. They are a generalization of interval exchange transformations. Unlike IETs the ITMs are generally not invertible.

Define $\Om_0 = \Om$, $\Om_n = T \Om_{n-1}$ for $n\ge 1$, and let $X$ be the closure of $\cap_{n=1}^\infty \Om_n$. An interval translation map is called of \emph{finite type} if $\Om_{n+1} = \Om_n$ for some $n$, otherwise it is called of \emph{infinite type}. Denote the set of infinite type ITMs by $\mathcal{S}$.

Schmeling and Troubetzkoy~\cite{Schmeling2000} proved that for any $d$, an ITM is of finite type iff $X$ consists of a finite union of intervals. In this case, some power of the restriction~$T|_X$ is an interval exchange transformation. If $T$ is of infinite type and $T|_X$ is transitive, then~$X$ is a Cantor set. Additionally, Bruin and Troubetzkoy~\cite{Bruin2003} showed that if $X$ is transitive, then it is minimal i.e. every orbit is dense.

In their pioneering paper~\cite{Boshernitzan1995}, Boshernitzan and Kornfeld gave the first example of an ITM of infinite type. They also asked, \emph{``To what extent the ITMs of finite type are typical? In particular, is it true that `almost all' ITMs are of finite type?''} Then they remarked that \emph{the answer is affirmative for $d = 3$} but gave no proof of that.

Until now, the question was answered (positively) only for some specific families of ITMs: by Bruin and Troubetzkoy~\cite{Bruin2003} for a 2-parameter family in $\ITM(3)$, by Bruin~\cite{Bruin2007} for a $d$-parameter family in $\ITM(d+1)$, and by Suzuki, Ito, Aihara~\cite{Suzuki2005} and Bruin, Clack~\cite{Bruin2012} for so-called double rotations, see Subsection~\ref{ss:d-r-def}. Schmeling and Troubetzkoy~\cite{Schmeling2000} established a number of related topological results, in particular, that the interval translation mappings of infinite type form a $G_\delta$ subset of the set of all interval translation mappings, whereas the interval translation mappings of finite type contain an open subset.


But the original question stood open. In this paper, we finally justify the affirmative answer for general ITMs of three intervals:
\begin{theorem} \label{t:main}
  In the space~$\ITM(3)$, the set~$\mathcal{S} \cap \ITM(3)$ has zero Lebesgue measure.
\end{theorem}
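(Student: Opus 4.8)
The plan is to run a renormalization (induction) scheme on $\ITM(3)$ that either terminates in finitely many steps — producing a finite type map — or else, on the (measure zero) non-terminating set, forces the parameters into a lower-dimensional structure. Concretely, I would first normalize and stratify $\ITM(3)$: the translation vector $(\gamma_1,\gamma_2,\gamma_3)$ and breakpoints $(\beta_1,\beta_2)$ give coordinates, and I would split into the combinatorial cases according to the cyclic order in which the images $T\dd_j$ overlap. The key observation is that $\Om_1 = T\Om$ is a union of at most three intervals whose total length is at most $1$; if the images do not cover $[0,1)$, then the "gap" structure lets us view $T|_{\Om_1}$ again as an ITM of at most three intervals on a shorter domain, after collapsing the gaps (a standard first-return / rescaling step). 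Iterating, I get an induction operator $\mathcal{T}$ defined on a full-measure invariant subset of $\ITM(3)$, together with an explicit piecewise-affine (projectively linear) action on parameter space.

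Second, I would reduce the number of intervals whenever possible: if at some stage a branch of the induced map is redundant (its image already covered, or two adjacent pieces translate by amounts making them merge), the induced system is honestly an ITM of two intervals, i.e. a rotation, hence IET, hence finite type — and by Schmeling–Troubetzkoy~\cite{Schmeling2000} finite type of the induced map pulls back to finite type of the original $T$. So the only dangerous branch of the induction is the one that stays genuinely $3$-interval forever; this is exactly where double rotations live. I would show that this branch, up to the renormalization, lands inside the space of double rotations (as defined in Subsection~\ref{ss:d-r-def}), and then invoke the known result of Suzuki–Ito–Aihara~\cite{Suzuki2005} and Bruin–Clack~\cite{Bruin2012} that almost every double rotation is of finite type.

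Third — and this is the analytic heart — I would prove that the non-terminating set has measure zero directly, by a Borel–Cantelli / distortion argument on the induction operator: show that at each step there is a definite probability (bounded below, or summable-complement) that a gap opens or a branch becomes redundant, using that the affine maps describing the induction are uniformly expanding in a suitable cross-section transverse to the "renormalizable" direction. Equivalently, show the Rauzy–Veech-type induction on $3$-interval ITMs has an attractor consisting of rotations and double rotations, and that the basin of anything else is null. The map $T$ is of infinite type precisely when renormalization never simplifies the combinatorics, and one controls the measure of that event by the expansion estimate.

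The main obstacle I expect is step three: controlling the measure of the "always genuinely three intervals, never a gap" set. Unlike Rauzy–Veech induction for IETs, here length is not preserved (it can only shrink), which actually helps — one should get honest expansion rather than merely a cocycle — but one must carefully handle the boundary strata where breakpoints collide or where $T$ develops a periodic orbit, and must verify that the finitely many bad combinatorial cases (including the self-similar double rotation locus) are either lower-dimensional or already covered by the cited double-rotation results. The bookkeeping of the combinatorial cases for $d=3$ is finite but genuinely the place where the proof must be done by hand; everything else is soft (Schmeling–Troubetzkoy for the pullback of finite type, Bruin–Troubetzkoy/Bruin for consistency checks, and the double-rotation theorem for the residual branch).
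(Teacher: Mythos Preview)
Your proposal correctly identifies the right endpoint --- reduce to double rotations and invoke the Bruin--Clack theorem --- and your pullback mechanism (finite type of the induced map implies finite type of $T$) is exactly Lemma~\ref{l:iff-finite}. But you miss the paper's key simplification, and as a result your ``step three'' is both the hardest part of your plan and entirely unnecessary.

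The paper does \emph{not} run an infinite renormalization scheme on $\ITM(3)$. Instead it proceeds in two finite moves. First, the fitting operator $\Tight$ (your gap-collapsing, but done in one shot via an explicit formula for the trap $\Delta$) sends $\ITM(3)$ onto $\TITM(3)$ by a piecewise rational map of full rank (Proposition~\ref{p:rank}). Second --- and this is the point you do not anticipate --- a finite combinatorial case analysis on $\TITM(3)$ shows that \emph{every} tight $3$-interval ITM is either already a double rotation (Cases $A$, $A'$) or becomes one after a \emph{single} first-return induction to $\Delta_1\cup\Delta_2$ or $\Delta_2\cup\Delta_3$ (Cases $B$, $B_i$, $C_i$). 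There is no infinite branch to control: the induction terminates in at most one step, and on each combinatorial piece the induction is an invertible rational map. Measure zero then pulls back trivially from $\Rot(2)$ through these finitely many diffeomorphic pieces.

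So your Borel--Cantelli/distortion argument, which you yourself flag as the main obstacle, is not needed at all for $d=3$. The actual work is the hand case analysis of Theorem~\ref{t:titm-d-r}, which is exactly the ``bookkeeping of the combinatorial cases'' you mention in passing but treat as a detail rather than the heart of the proof. Your infinite-renormalization route might be viable in principle, but it would amount to re-deriving Bruin--Clack's expansion estimates inside a larger parameter space, which is strictly harder than what the paper does.
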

\begin{remark}
  The numerical estimate by~\cite{Bruin2012} infers that its Hausdorff dimension~$H$ satisfies $4 \le H \le 4.88$.
\end{remark}
The same question about~$d > 3$ intervals remains open.

The entropy and word growth properties of piecewise translations, including their higher dimension generalizations, were studied in~\cite{Goetz1999}, \cite{Goetz2000}, \cite{Buzzi2001}, \cite{Goetz2001}. The question of their (unique) ergodicity was considered by \cite{Bruin2003}, \cite{Buzzi2004}, \cite{Bruin2012}.

%

We always assume $\gamma_i \ne 0$ and $\beta_i - \beta_{i-1} \ne 0$ for all $1 \le i \le d$.

\subsection{Tight ITMs}


Let~$T$ be any interval translation map. We say that an interval~$\Delta \subset \Om$ is \emph{T-regular} (or simply \emph{regular}) if there exists $N \in \bbN$ such that for any $x \in [0,1)$ there exists $1 \le n < N$ such that $T^n x \in \Delta$. In particular, every point of~$\Delta$ returns to $\Delta$ after a uniformly bounded number of iterates of $T$, so the \emph{induced (i.e. first-return)} map of $T$ is well defined on $\Delta$. We denote the induced map by~$T_\Delta$. We say that an interval $\Delta \subset \Om$ is a \emph{trap} if it is regular and $T \dd \subset \dd$. In this case, we have $T_\dd = T|_\dd$.

The following Lemma shows that the properties to be finite or infinite type are preserved by inductions.

\begin{lemma} \label{l:iff-finite}
Assume $T|_X$ is transitive. Then,
\begin{enumerate}
  \item\label{en:iff1} if there exists a regular~$\dd$ such that $T_\dd$ has finite type, then $T$ has finite type;
  \item\label{en:iff2} if~$T$ is finite type, then for any regular~$\dd$ the map $T_\dd$ has finite type.
\end{enumerate}
\end{lemma}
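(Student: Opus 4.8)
The plan is to reduce both implications to the criterion of Schmeling and Troubetzkoy recalled above, namely that an ITM has finite type exactly when its attractor is a finite union of intervals. I would first check that $T_\dd$ is itself an interval translation map: regularity of $\dd$ bounds the first-return time to $\dd$ by the constant $N$, so only finitely many points of $\dd$ have a $T$-orbit of length $<N$ that meets a discontinuity of $T$ or an endpoint of $\dd$; these finitely many points cut $\dd$ into finitely many subintervals, on each of which the return time and the itinerary are constant, hence $T_\dd$ is a single translation. Thus $T_\dd\in\ITM(d')$ for some $d'$, and it carries an attractor $Y=\overline{Y_0}$ with $Y_0:=\bigcap_{n\ge 0}T_\dd^{\,n}\dd$, a decreasing intersection exactly parallel to $X_0:=\bigcap_{n\ge 0}\Om_n$, $X=\overline{X_0}$.

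The core of the argument is the pair of set identities
\[
  Y_0=X_0\cap\dd,\qquad X_0=\bigcup_{j=0}^{N-1}T^{j}(Y_0).
\]
For the first: a point of $T_\dd^{\,n}\dd$ equals $T^{m}y$ with $y\in\dd$ and $m\ge n$, hence lies in $\Om_m$, so since the $\Om_m$ decrease a point of $Y_0$ lies in all of them and $Y_0\subseteq X_0\cap\dd$; conversely, for $x\in X_0\cap\dd$ and any $n$ write $x=T^{n}z$ with $z\in\Om$, and use regularity to see that the finite orbit from $z$ to $x$ enters $\dd$ within $N$ steps and afterwards returns to $\dd$ within $N$ steps each time, which exhibits $x$ as $T_\dd^{\,\ell}$ of a point of $\dd$ with $\ell\ge (n-N)/N$, so that letting $n\to\infty$ (and using that $T_\dd^{\,m}\dd$ decreases) forces $x\in Y_0$. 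For the second identity one first checks $TX_0=X_0$: the inclusion $\subseteq$ is immediate from decreasingness, and $\supseteq$ holds because $T$ has finite fibres, so a preimage of a given point of $X_0$ that lies in infinitely many $\Om_m$ lies in all of them; then, every point of the forward-invariant set $X_0$ reaches $\dd$ within $N$ steps, so running a backward chain of length $N-1$ inside $X_0$ writes any $x\in X_0$ as $T^{j}y$ with $0\le j<N$ and $y\in X_0\cap\dd=Y_0$, while $T^{j}(Y_0)\subseteq T^{j}(X_0)=X_0$ gives the reverse inclusion.

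Granting the identities, the two statements are quick. If $T_\dd$ has finite type, then $Y_0=T_\dd^{\,n}\dd$ for some $n$ is a finite union of intervals, hence so is $X_0=\bigcup_{j<N}T^{j}(Y_0)$, since each $T^{j}$ is a piecewise translation with finitely many branches and thus maps finite unions of intervals to finite unions of intervals; then $X=\overline{X_0}$ is a finite union of intervals and the criterion gives that $T$ has finite type. Conversely, if $T$ has finite type then $X_0=\Om_n$ for some $n$ is a finite union of intervals, hence so is $Y_0=X_0\cap\dd$ and hence so is $Y=\overline{Y_0}$, and the criterion gives that $T_\dd$ has finite type. As a companion fact one records that $T_\dd|_Y$ is again transitive --- a dense $T$-orbit in $X$ meets $\dd$ cofinally and its trace is a dense $T_\dd$-orbit in $Y$ --- which is the one place where the transitivity of $T|_X$ is used and which makes the lemma iterable.

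The step I expect to require the most care is the identity $Y_0=X_0\cap\dd$, where the single regularity constant $N$ must be made to control simultaneously the time for an orbit to first reach $\dd$ and all of its subsequent return times, so that the number $\ell$ of induced iterates really does tend to infinity. The remaining work --- keeping track of half-open endpoints when passing between a set, its image under a piecewise translation, and its closure --- is shallow, but it must be done carefully enough that the bookkeeping with ``finite union of intervals'' is watertight.
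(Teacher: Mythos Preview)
Your argument is correct and proceeds along a genuinely different line from the paper's. The paper argues both implications by contradiction via minimality: assuming $T$ has infinite type, it uses that $X$ is a minimal Cantor set to produce a proper closed $T_\dd$-invariant subset of $X(T_\dd)$, contradicting minimality of $X(T_\dd)$; the converse is handled symmetrically by spreading $X(T_\dd)$ under $T$ to obtain a proper closed $T$-invariant subset of $X$. Your route is instead constructive: you establish the explicit identities $Y_0=X_0\cap\dd$ and $X_0=\bigcup_{j<N}T^{j}(Y_0)$ and then invoke the Schmeling--Troubetzkoy criterion directly on both sides. The paper's argument is shorter once one is willing to quote minimality of transitive attractors; your argument is more elementary, gives a concrete relationship between the two attractors, and---as you yourself observe---does not actually require the transitivity hypothesis for the two implications themselves (you only use it for the auxiliary remark that $T_\dd|_Y$ inherits transitivity). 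So your version in fact proves a slightly stronger statement than the lemma as stated.
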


\begin{proof}
  First we prove \ref{en:iff1}. Assume~$T$ has infinite type. Then $X$ is a $T$-invariant Cantor set with minimal dynamics (see~\cite{Schmeling2000}, \cite{Bruin2003}). Because $\dd$ is regular, $Y = X \cap \dd$ is a nonempty Cantor set. It is also $T_\dd$-invariant, and thus $Y \subset X(T_\dd)$. Because $T_\dd$ has finite type, $X(T_\dd)$ is a finite union of intervals with an IET on them. This implies $Y \ne X(T_\dd)$. Because $X$ is transitive, it is minimal for $T$, and thus $X(T_\dd)$ is minimal for $T_\dd$. But we have just shown that it contains another invariant closed set. This contradiction proves the lemma.

  Recall that $X$ is a finite union of intervals with an IET on them. So, because $X$ is transitive, it is also minimal.

  Now we prove \ref{en:iff2}. Assume~$T_\dd$ has infinite type. Then $X(T_\dd)$ is a $T_\dd$-invariant transitive set. Denote by $Y$ the union of its $N$ iterates by~$T$:
  $$
    Y = \bigcup_{n=1}^N T^n X(T_\dd).
  $$
  Now $Y$ is a $T$-invariant closed set and $Y \varsubsetneq X$. This contradicts the minimality of $X$ and thus proves the lemma.
\end{proof}

%
%

For any~$M \subset [0,1]$, we denote $[M) := [\inf M, \sup M)$. We say that an interval translation map $T\colon \Om \to \Om$ is \emph{tight} if $[T\Om) = [\Om)$. We denote the space of tight interval translation maps of $d$ intervals by~$\TITM(d)$. Recall that $\ITM(d)$ is a filled convex polytope in $\bbR^{2d-1}$, and observe that $\TITM(d)$ is a connected finite union of filled convex polytopes in $\bbR^{2d-3}$.

  It is easy to see that for any~$T \in \ITM(d)$, there exists a trap~$\Delta$ such that the map~$T_\Delta$ is a tight interval translation map of~$r$ intervals, $r \le d$.
Namely, take $X_0' := \Omega$, and let $X_k' := [T X_{k-1}')$, $\Delta := \bigcap_{k=0}^\infty X_k'$. Then $\Delta \supset X$, $\Delta$ is a trap, and $T_\Delta$ is tight.

However, to prove Theorem~\ref{t:main}, we will need more explicit way to produce such a $\Delta$.

  For $d = 2$, we have $\Delta = [\beta_1 + \gamma_2, \beta_1 + \gamma_1)$, see Figure~\ref{f:rotation} and keep in mind that $\gamma_1 > 0$, $\gamma_2 < 0$.
  \begin{figure}[htp]
    \begin{center}
    \includegraphics[width=4in]{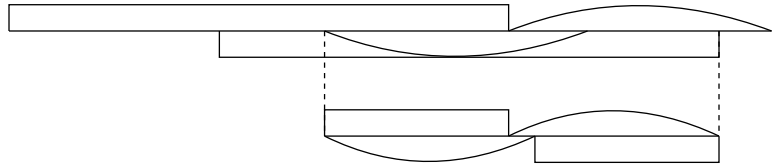}\\
    \caption{ITM of two intervals: rotation.}   \label{f:rotation}
    \end{center}
  \end{figure}
  Indeed, for any $x < \beta_1$ we have $Tx = x+\gamma_1$. Thus there exists $1 \le n_1 \le \left[\frac{\beta_1}{\gamma_1}\right]+1$ such that $T^{n_1} x \in [\beta_1, \beta_1 + \gamma_1)$. Similarly, for any $x \ge \beta_1$ there exists $1 \le n_2 \le \left[\frac{1-\beta_1}{|\gamma_2|}\right]+1$ such that $T^{n_2} x \in [\beta_1 + \gamma_2, \beta_1)$. Thus $\dd$ is regular.
  Now note that $T|_\dd$ is just a rotation $x \mapsto x+\gamma_1 \mod \dd$, so $T\dd = \dd$. Thus $\dd$ is a trap and $T_\dd$ is tight.

  To obtain a similar formula for $d \ge 3$,
  consider two cases:
  \begin{align}
    \label{case:reduc} \dd_1 \cap T\Om = \emptyset \text{ or } \dd_d \cap T\Om = \emptyset;\\
    \label{case:irreduc} \dd_1 \cap T\Om \ne \emptyset \text{ and } \dd_d \cap T\Om \ne \emptyset.
  \end{align}
  In the first case, we can completely remove $\dd_1$ or $\dd_d$ and thus reduce the problem to the study of $\ITM(d-1)$.
  For $\ITM(3)$, this case can be ignored because then the map is necessary a rotation, and thus finite type.

  In the second case, let $I_-$ be the set $\{ i \,|\, \gamma_i < 0 \}$ and $I_+$ be the set $\{ i \,|\, \gamma_i > 0 \}$. Because~$\gamma_1 > 0$ and $\gamma_d < 0$, the both sets are nonempty. Take the interval
$$
\Delta = [\delta_0, \delta_1) = [ \min_{i \in I_-} (\beta_{i-1} + \gamma_i), \max_{i \in I_+} (\beta_i + \gamma_i)),
$$
\begin{figure}[htp]
    \begin{center}
    \includegraphics[width=4in]{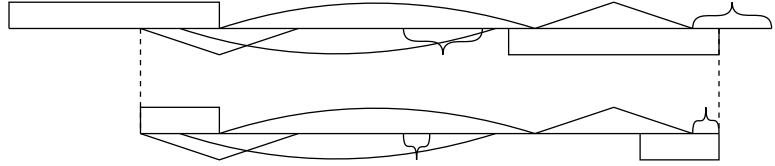}\\
    \caption{Fitting operator.}   \label{f:fitting}
    \end{center}
  \end{figure}
  see Figure~\ref{f:fitting}.
  In the same way as for $d=2$ one can show that for any $x < \delta_0$ or $x > \delta_1$ there exists a bounded $n$ such that $T^n x \in \dd$. Now, for any $x \in \dd$, $x$ either moves to the left or to the right. Assume it moves to the left. Then $\min_{i \in I_-} (\beta_{i-1} + \gamma_i) \le T x < x$ which implies $T x \in \dd$. Similarly, for $x$ moving right we have $x < T x \le \max_{i \in I_+} (\beta_i + \gamma_i)$. Thus $T x \in \dd$, and $\dd$ is a trap.

  Because we are in the second case, for every $2 \le i \le d$, the left end~$\beta_{i-1}$ of~$\dd_i$ belongs to $\dd$. Moreover, for $i = \argmin_{i \in I_-} (\beta_{i-1} + \gamma_i)$ the left end of $\Delta_i$ maps exactly to $\delta_0$. (Here and below, $\argmin$ stands for ``argument of the minimum'', i.e., $\argmin_{x \in S} f(x) := \{ x \in S \,|\, \forall y \in S \, f(y) \ge f(x) \}$, and the same with $\argmax$.) Thus $T_\dd$ is tight from the left. Similarly, we show $T_\dd$ is tight from the right and thus tight in general.

Rescale the map~$T_\Delta$ so that $\dd$ becomes $[0,1)$. We say that the result, $\tilde T_\dd$, is the \emph{fitting} of $T$ and denote the \emph{fitting operator} $T \mapsto \tilde T_\Delta$ by $\Tight_d \colon \ITM(d) \to \TITM(d)$. When the value of $d$ is clear, we will write $\Tight$ instead of $\Tight_d$ for brevity.

\begin{proposition} \label{p:rank}
  In the case~\eqref{case:irreduc}, the fitting operator is a piecewise rational map of the maximal rank $2d-3$.
\end{proposition}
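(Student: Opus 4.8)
The plan is to establish separately that $\Tight_d$ is piecewise rational and that its rank equals the target dimension $2d-3$.

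For piecewise rationality I would cover the region of $\ITM(d)$ cut out by \eqref{case:irreduc} by finitely many semialgebraic pieces on which the combinatorics of the construction of $\Delta$ is frozen. A piece is determined by the sign vector of $(\gamma_1,\dots,\gamma_d)$, and hence by $I_-$ and $I_+$, together with the index $i_0\in I_-$ realizing $\min_{i\in I_-}(\beta_{i-1}+\gamma_i)$ and the index $i_1\in I_+$ realizing $\max_{i\in I_+}(\beta_i+\gamma_i)$; the locus where such an argmin or argmax fails to be unique, or where some $\gamma_i=0$, is a finite union of affine hyperplanes and may be discarded. On the interior of a piece $\delta_0=\beta_{i_0-1}+\gamma_{i_0}$ and $\delta_1=\beta_{i_1}+\gamma_{i_1}$ are affine in the coordinates $(\beta_1,\dots,\beta_{d-1},\gamma_1,\dots,\gamma_d)$, and $L:=\delta_1-\delta_0$ is affine and strictly positive, since by the discussion preceding the proposition all $d-1$ breakpoints lie in $\Delta=[\delta_0,\delta_1)$. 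As those breakpoints even lie in the interior of $\Delta$, $T_\Delta=T|_\Delta$ is an ITM of exactly $d$ intervals; conjugating it by the affine rescaling $\phi(x)=(x-\delta_0)/L$ shows that the parameters of $\tilde T_\Delta=\Tight_d(T)$ are $\tilde\beta_j=(\beta_j-\delta_0)/L$ for $1\le j\le d-1$ and $\tilde\gamma_j=\gamma_j/L$ for $1\le j\le d$, each a ratio of affine functions. Hence $\Tight_d$ agrees with a rational map on each piece.

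For the rank I would use that $\Tight_d$ restricts to the identity on $\TITM(d)$, viewed inside $\ITM(d)$. First, every tight $T$ satisfies \eqref{case:irreduc}: $\inf T\Omega=\min_j(\beta_{j-1}+\gamma_j)=0$ is attained and $0<\beta_1$, so $0\in\dd_1\cap T\Omega$, and symmetrically $\dd_d\cap T\Omega\ne\emptyset$. For tight $T$ that minimum cannot be attained only at indices of $I_+$ (for $j=1$ this forces $\gamma_1=0$; for $j\ge2$ it gives $\beta_{j-1}+\gamma_j>0$ since $\beta_{j-1}\ge\beta_1>0$ and $\gamma_j>0$), so $\delta_0=\min_{i\in I_-}(\beta_{i-1}+\gamma_i)=0$, and symmetrically $\delta_1=1$. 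Therefore $\Delta=[0,1)$, $\phi$ is the identity, $T_\Delta=T$, and $\Tight_d(T)=T$. Now pick a generic point $T$ of $\TITM(d)$, lying in the interior of one of its top-dimensional defining polytopes (a smooth $(2d-3)$-dimensional piece of $\TITM(d)\subset\ITM(d)$, the two tightness equations being independent there) and outside the hyperplanes where $\Tight_d$ is nondifferentiable. Composing with the inclusion $\TITM(d)\hookrightarrow\ITM(d)$ shows that $D_T\Tight_d$ restricts to this inclusion on the tangent space of $\TITM(d)$ at $T$, so $\rank D_T\Tight_d\ge 2d-3$; trivially $\rank D\Tight_d\le\dim\TITM(d)=2d-3$ everywhere, so the rank is exactly $2d-3$ at $T$. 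By lower semicontinuity of the rank of a map real-analytic on each connected piece, it then equals $2d-3$ on a dense open full-measure subset of every piece meeting $\TITM(d)$.

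I expect the main chore to be the bookkeeping of the piecewise decomposition, and in particular checking that on each piece all $d-1$ breakpoints lie in the \emph{interior} of $\Delta$, so that $\Tight_d(T)$ is a tight ITM of $d$ (not fewer) intervals and the target dimension is honestly $2d-3$; one also has to confirm that the degenerate loci (ties in the defining minima or maxima, a breakpoint meeting $\delta_0$ or $\delta_1$, a vanishing $\gamma_i$) form a Lebesgue-null set. The retraction identity $\Tight_d|_{\TITM(d)}=\mathrm{id}$ is precisely what lets the rank computation sidestep the fact that $\TITM(d)$ is not open in $\ITM(d)$.
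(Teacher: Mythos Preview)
Your route to the rank is genuinely different from the paper's. The paper factors $\Tight_d=\mathcal{R}\circ\mathcal{T}$ into a truncation $\mathcal{T}$ and a rescaling $\mathcal{R}$, introduces the coordinates $B_{i-1}=\beta_{i-1}+\gamma_i$, and on each cell $C_{jk}=\{j=\argmin_{i\in I_-}(\beta_{i-1}+\gamma_i),\ k=\argmax_{i\in I_+}(\beta_i+\gamma_i)\}$ writes $\mathcal{T}$ explicitly as an \emph{invertible} linear map $\bbR^{2d-1}\to\bbR^{2d-1}$; since $\mathcal{R}$ is rational of rank $2d-3$, the composite has rank $2d-3$ on every cell. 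Your retraction argument ($\Tight_d|_{\TITM(d)}=\mathrm{id}$, hence the derivative has rank $\ge 2d-3$ along $\TITM(d)$) is more conceptual and avoids the coordinate computation entirely; the paper's approach buys uniformity over all cells at once, at the price of that bookkeeping.

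There is, however, a gap you have to close. Your analyticity/semicontinuity step only yields full rank on pieces \emph{meeting} $\TITM(d)$, as you explicitly acknowledge. The pieces are open in the $(2d-1)$-dimensional parameter space while $\TITM(d)$ has codimension two, so it is not automatic that every piece intersects it; and a piece missed by $\TITM(d)$ has positive measure, so the proposition would fail there. What saves you is that your rational formula depends only on the pair $(i_0,i_1)$, not on the full sign pattern, so it is enough to check that for every admissible pair $(j,k)$ (namely $2\le j\le d$, $1\le k\le d-1$, $j\ne k$) the cell $C_{jk}$ contains a tight ITM. This is easy---set $\gamma_j=-\beta_{j-1}$, $\gamma_k=1-\beta_k$, and take the remaining $\gamma_i$ nonzero and small---but it must be said; otherwise the rank claim is established only on a proper open subset of the domain.
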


\begin{proof}
  We partition the space~$\ITM(d)$ into the union of the cells~$C_{jk}$, $j \ne k$:
  $$
    C_{jk} = \{ T \in \ITM(d) \,|\, j = \argmin_{i \in I_-} (\beta_{i-1} + \gamma_i), k = \argmax_{i \in I_+} (\beta_i + \gamma_i) \}.
  $$
  The fitting operator is the composition of truncation and rescaling:
  $$
    \mathcal{F} = \mathcal{R} \circ \mathcal{T}.
  $$
  The rescaling part is a rational map~$\mathcal{R}\colon \bbR^{2d-1} \to \bbR^{2d-3}$ of rank $2d-3$. To understand the truncation part~$\mathcal{T}$, we introduce new coordinates~$\Beta_{i-1} = \beta_{i-1} + \gamma_{i}$, $i = 1,\dots,d$, which are the images of the left ends of~$\Delta_i$. In particular, $\Beta_0 = \gamma_0$. At every cell~$C_{jk}$, the truncation is a linear map~$\mathcal{T} \colon \bbR^{2d-1} \to \bbR^{2d-1}$. Let us show it is invertible. In the coordinates~$(\beta_i, \Beta_i)$, the truncation has the form
  \begin{equation}  \label{e:trunc}
    \begin{array}{c}
      \left[\begin{array}{c}
    0, \beta_1, \beta_2, \dots, \beta_{d-2}, \beta_{d-1}, 1 \\
    \Beta_0, \Beta_1, \dots, \Beta_{j-1}, \dots, \Beta_k, \dots, \Beta_{d-2}, \Beta_{d-1} \\
      \end{array}\right] \\
    \downarrow\,\mathcal{T} \\
    \left[\begin{array}{c}
    \Beta_{j-1}, \beta_1, \beta_2, \dots, \beta_{d-2}, \beta_{d-1}, \Beta_k \\
    \Beta_{j-1}+\Beta_0, \Beta_1, \dots, \Beta_{j-1}, \dots, \Beta_k, \dots, \Beta_{d-2}, \Beta_k + \Beta_{d-1} - \beta_{d-1}.
    \end{array}\right] \\
        \end{array}
  \end{equation}
  Note that most of the coordinates are mapped identically. The non-identical part of~$\eqref{e:trunc}$ is
  \begin{equation}  \label{e:trunc-reduced}
    \begin{array}{c}
    \left[\begin{array}{c}
    0, \beta_{d-1}, 1 \\
    \Beta_0, \Beta_{j-1}, \Beta_k, \Beta_{d-1} \\
    \end{array}\right] \\
    \downarrow\,\mathcal{T} \\
    \left[\begin{array}{c}
    \Beta_{j-1}, \beta_{d-1}, \Beta_k \\
    \Beta_{j-1}+\Beta_0, \Beta_{j-1}, \Beta_k, \Beta_k + \Beta_{d-1} - \beta_{d-1}
    \end{array}\right] \\
        \end{array}
  \end{equation}
  which is clearly invertible. So, at every cell~$C_{jk}$ the fitting~$\mathcal{F}$ is a rational map of rank~$2d-3$.
\end{proof}

All the previous considerations are valid for any number of intervals. In what follows, we have the proofs only for the case of $d=3$.

\subsection{Double rotations}\label{ss:d-r-def}

Following Suzuki, Ito, Aihara~\cite{Suzuki2005}, we introduce the family of double rotations.
A \emph{double rotation} is a map~$f_{(a, b, c)} \colon [0,1) \to [0,1)$ defined by
$$
f_{(a, b, c)} (x) = \begin{cases}
  \{ x + a \}, & \text{if $x \in [0, c)$,} \\
  \{ x + b \}, & \text{if $x \in [c, 1)$.} \\
\end{cases}
$$
In the circle representation $[0,1) \equiv S^1$, a double rotation is a map $S^1 \to S^1$ defined by independent rotations of two complementary arcs of~$S^1$. Clearly, any double rotation is an ITM of two to four intervals. Denote the space of parameters~$(a, b, c)$ of double rotations by $\Rot(2) = [0,1) \times [0,1) \times [0,1)$. 

For the double rotations, the finiteness problem is solved by Bruin and Clack in \cite{Bruin2012} using the renormalization operator of Suzuki, Ito, Aihara~\cite{Suzuki2005}. Namely, they proved the following
\begin{theorem}[Bruin, Clack, 2012] \label{t:bruin-clack}
  In the space~$\Rot(2)$, the set~$\mathcal{R} = \mathcal{S} \cap \Rot(2)$ of ITMs of infinite type has zero Lebesgue measure.
\end{theorem}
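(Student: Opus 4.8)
The plan is to study the renormalization operator~$\Phi$ of Suzuki, Ito and Aihara \cite{Suzuki2005} on the parameter space~$\Rot(2)$ and to show that, for almost every parameter, the $\Phi$-orbit escapes in finitely many steps into a region where the map is manifestly of finite type. First I would recall the renormalization step itself: the first-return map of a double rotation~$f_{(a,b,c)}$ to a suitably chosen inducing subinterval~$J\subset[0,1)$ (chosen in the spirit of the trap~$\Delta$ from Section~\ref{s:intro}) is, after the affine rescaling carrying~$J$ to~$[0,1)$, again a double rotation. This defines a map $\Phi\colon U\to\Rot(2)$ on the open set~$U$ of parameters that can be renormalized non-trivially; each of its countably many branches is an explicit piecewise rational map, the branches being indexed by the quotients of a continued-fraction-type algorithm in~$(a,b,c)$, and on suitable charts the branches are projective, hence of uniformly bounded distortion. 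Since~$J$ is regular, a standard argument (cf.\ Lemma~\ref{l:iff-finite}) shows that $f$ is of finite type if and only if $\Phi f$ is.

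Next I would record the renormalization characterization of finite type: since finite type is preserved in both directions by~$\Phi$, a double rotation is of finite type precisely when its orbit $f,\Phi f,\Phi^{2}f,\dots$ leaves~$U$ after finitely many steps, arriving at a parameter that is manifestly of finite type — an arc has vanished, or the fitted induced map is already a rotation or exhibits a proper trap directly. By Schmeling--Troubetzkoy \cite{Schmeling2000}, the finite-type parameters are known to contain a nonempty open set, so the ``escape region'' is genuinely thick. Consequently
$$
 \mathcal{S}\cap\Rot(2)\;=\;\bigl\{\,f\in\Rot(2)\;:\;\Phi^{n}f\in U\ \text{for all}\ n\ge 0\,\bigr\}\;=\;\bigcap_{n\ge 0}\Phi^{-n}(U),
$$
and it remains to show this nested intersection is Lebesgue-null.

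The analytic heart is a uniform escape estimate. Write $E_n$ for the set of parameters surviving $n$ renormalizations without leaving~$U$; it is a disjoint union of renormalization $n$-cylinders, and on each of them $\Phi^{n}$ is a diffeomorphism onto~$\Rot(2)$ of uniformly bounded distortion. One then proves that there is $\theta<1$ such that within every renormalization cylinder at least a proportion $1-\theta$ of the parameters leave~$U$ at the next step; combined with bounded distortion this yields $\Leb(E_{n+1})\le\theta\,\Leb(E_n)$, hence $\Leb(E_n)\le C\theta^{n}\to0$ and $\Leb(\mathcal{S}\cap\Rot(2))=0$. A more robust alternative is to build a $\Phi$-invariant probability measure~$\mu$ equivalent to Lebesgue on a first-return tower and prove its ergodicity via a Rényi/folklore theorem for countably-branched expanding maps; since $\Leb(\Rot(2)\setminus U)>0$, Birkhoff's theorem then forces $\mu$-a.e.\ orbit to leave~$U$ eventually, and the conclusion follows. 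Bruin and Clack \cite{Bruin2012} carry this programme through and, by controlling the cylinder sizes, also extract Hausdorff-dimension bounds for $\mathcal{S}\cap\Rot(2)$.

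The step I expect to be the main obstacle is the behaviour of~$\Phi$ near the boundary of~$U$ and near the resonant loci where the combinatorics of the return map jumps: one must check that these sets trap no positive-measure set of non-escaping parameters, and, crucially, that the distortion constants remain bounded and the escape proportion remains bounded below as the cylinders shrink toward them — i.e.\ that the renormalization algorithm has no slow (parabolic-type) directions beyond harmless ones. Establishing this quantitative non-degeneracy — equivalently, the uniform escape estimate, or the existence and integrability of the absolutely continuous invariant density — is where the real work lies; the rest is bookkeeping with the branch structure of the continued-fraction-type algorithm.
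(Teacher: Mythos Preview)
The paper does not prove this statement at all: Theorem~\ref{t:bruin-clack} is quoted as an external result of Bruin and Clack~\cite{Bruin2012}, established via the renormalization operator of Suzuki, Ito and Aihara~\cite{Suzuki2005}, and is used as a black box in the proof of Theorem~\ref{t:main}. There is therefore no ``paper's own proof'' to compare your sketch against.

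That said, your outline is a faithful high-level description of the strategy actually employed in~\cite{Bruin2012}: one studies the Suzuki--Ito--Aihara induction on the parameter cube, observes that infinite type corresponds precisely to infinite renormalizability, and then proves that the infinitely renormalizable set is Lebesgue-null by a distortion/escape argument on the cylinder structure. Two cautions if you intend to flesh this out. First, the renormalization branches are not in general full (they do not all map onto the whole of~$\Rot(2)$), so the clean ``bounded distortion plus uniform escape proportion $1-\theta$ on every cylinder'' picture is an idealization; the actual argument in~\cite{Bruin2012} requires a more careful accounting of branch images and a summability estimate rather than a single geometric contraction. Second, the alternative route via an absolutely continuous invariant measure and ergodicity is not what Bruin and Clack do, and making it rigorous here would require verifying a Rényi-type condition that is not obviously available. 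These are refinements rather than fatal gaps, but the sketch as written glosses over precisely the technical content that makes~\cite{Bruin2012} a paper rather than a remark.
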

It also follows from their construction that $\mathcal{R}$ is contained in a closed nowhere dense subset of~$\Rot(2)$.

In the present paper we show that the finiteness problem for~$\ITM(3)$ reduces to the one for the double rotations.

\begin{theorem} \label{t:titm-d-r}
  The space $\TITM(3)$ splits into countably many open sets~$A$, $A'$, $B$, $B_i$, $C_i$, $i \in \bbN$, such that their union~$U$ is dense in $\TITM(3)$, and the complement~$K = \TITM(3) \setminus U$ which is a union of countably many hyperplanes. Moreover,
\begin{itemize}
  \item any $T \in A \cup A'$ is a double rotation,
  \item any $T \in B$ is reduced to a double rotation via a single Type 1 induction,
  \item for any $i \in \bbN$, any $T \in B_i$ is reduced to a double rotation via a single Type 2 induction.
  \item for any $i \in \bbN$, any $T \in C_i$ is reduced to a single rotation via a single Type 2 induction.
\end{itemize}
  On every piece~$A, A', B, B_i, C_i$, these inductions are invertible rational maps.
\end{theorem}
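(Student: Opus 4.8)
The plan is to argue directly in coordinates. Write a tight ITM of three intervals as $\Delta_1=[0,\beta_1)$, $\Delta_2=[\beta_1,\beta_2)$, $\Delta_3=[\beta_2,1)$ with $\gamma_1>0>\gamma_3$, $\gamma_2\neq0$, subject to $T\Delta_j\subset[0,1)$ for all $j$ and the two tightness equations $\inf T\Omega=0$, $\sup T\Omega=1$; this presents $\TITM(3)$ as a finite union of $3$-dimensional convex polytopes. First I would record the involution $\sigma\colon x\mapsto1-x$, which sends $T$ to the tight ITM obtained by the coordinate change $(\gamma_1,\gamma_2,\gamma_3)\mapsto(-\gamma_3,-\gamma_2,-\gamma_1)$, $(\beta_1,\beta_2)\mapsto(1-\beta_2,1-\beta_1)$, and so exchanges $\{\gamma_2>0\}$ with $\{\gamma_2<0\}$; this is the source of the primed piece $A'$ and of the second copy of $B,B_i,C_i$, so it suffices to treat $\gamma_2>0$. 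Then I would cut $\TITM(3)$ into finitely many \emph{combinatorial cells}, recording the sign of $\gamma_2$, which of the three image intervals realizes $\inf T\Omega=0$ and which realizes $\sup T\Omega=1$ (two choices that turn tightness into two linear equations and solve for two parameters), and the order type of the six endpoints of $T\Delta_1,T\Delta_2,T\Delta_3$. A one-line check shows that for $\gamma_2>0$ tightness forces $\beta_2+\gamma_3=0$, so $T\Delta_3=[0,1-\beta_2)$; and then either $\beta_2+\gamma_2=1$, in which case $\Delta_2$ and $\Delta_3$ fuse into the single rotation-arc $[\beta_1,1)$ and $T$ is literally the double rotation $f_{(\gamma_1,\gamma_2,\beta_1)}$ — this open cell is $A$, its $\sigma$-image $A'$ — or $\beta_1+\gamma_1=1$ (with $\beta_2+\gamma_2<1$), giving the \emph{main region} $\mcM$, on which $T\Delta_1=[1-\beta_1,1)$.

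On $\mcM$ the three image intervals never cover $[0,1)$, so $T$ has a nonempty open gap $G:=[0,1)\setminus\overline{T\Omega}$, whose endpoints are piecewise linear in $(\beta_1,\beta_2,\gamma_2)$ according to how $T\Delta_2=[\beta_1+\gamma_2,\beta_2+\gamma_2)$ lies relative to $[0,1-\beta_2)$, to $[1-\beta_1,1)$ and to the hole $[1-\beta_2,1-\beta_1)$ between them; this gives the sub-partition of $\mcM$. The structural heart is the following observation. Consider the sub-interval $\Delta:=[0,\beta_2)=\Delta_1\cup\Delta_2$: the only way an orbit can leave $\Delta$ is to enter $\Delta_3=[\beta_2,1)$, on which $T$ merely subtracts $\beta_2$, and on the circle $[0,\beta_2)\cong\bbR/\beta_2\bbZ$ subtracting $\beta_2$ is invisible; hence the first-return map $T_\Delta$ acts on each complementary arc by a \emph{single} rotation mod $\beta_2$ — by $\gamma_1\bmod\beta_2$ on $\Delta_1$ and by $\gamma_2\bmod\beta_2$ on $\Delta_2$ — so, rescaling $\Delta$ to $[0,1)$, $T_\Delta$ is a double rotation, with parameters affine in $(\beta_1,\beta_2,\gamma_2)$ once the winding numbers $\lfloor\gamma_1/\beta_2\rfloor$ and $\lfloor\gamma_2/\beta_2\rfloor$ (and the analogous escape counts for orbits that spiral inside $\Delta_2$) are fixed. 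This is the origin of the countable index: one refines the sub-partition of $\mcM$ so that the winding numbers are constant on each cell, and on the cell labelled $i$ the induction $T\mapsto$ (rescaled $T_\Delta$) is an explicit rational map of $(\beta_1,\beta_2,\gamma_2)$ with an explicit rational inverse. The cells on which no spiralling occurs — the first return to $\Delta$ takes a uniformly bounded number of steps — are the ones the theorem reduces by a single Type 1 induction and form $B$; the spiralling cells are the $B_i$; and the cells on which the induced map's two rotation-arcs carry, for combinatorial reasons, equal rotations (so the double rotation degenerates to a single rotation) are the $C_i$. Matching every combinatorial cell to one of $A,A',B,B_i,C_i$ is a finite though lengthy task, halved by $\sigma$.

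To assemble the global statement: the resulting cells are pairwise disjoint open polytopes, and their union $U$ misses exactly (a) finitely many \emph{combinatorial} hyperplanes — where an endpoint of some $T\Delta_j$ equals $0$, $1$ or another, where $\gamma_2=0$, where $\gamma_1=\gamma_2$ or $\gamma_2=\gamma_3$, where $T$ is an IET (within $\mcM$ this is $\beta_1+\beta_2+\gamma_2=1$), and the boundaries among the combinatorial cells — together with (b) the countably many \emph{winding} hyperplanes $\gamma_2=i\beta_2$ and $\beta_1+i\beta_2=1$, $i\in\bbN$, on which a winding number jumps. Thus $K=\TITM(3)\setminus U$ is a countable union of hyperplanes, in particular closed with empty interior, and $U$ is dense and open. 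Finally, on each of $A,A',B,B_i,C_i$ the reduction map is given by explicit affine or rational formulas in the coordinates — on $A,A'$ it is the tautological reparametrization $T\mapsto(\gamma_1,\gamma_2,\beta_1)$ — and invertibility is the same finite linear-algebra computation as in Proposition~\ref{p:rank}: the inverse reinserts the gap (Type 1) or reinstates the recorded windings as extra $\beta_2$-steps (Type 2), and the triangular form of the forward map makes the inverse rational.

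The main obstacle is the middle paragraph, in two respects. First, one must verify that a \emph{single} induction step already lands on a double (or single) rotation rather than on yet another genuine three-interval ITM; for the spiralling cells this relies on choosing the induction interval so that every accumulated spiral iterate contributes only one of two rotation values mod $\beta_2$, and on checking this uniformly in the winding index — including locating precisely the boundary between the non-spiralling cells ($B$), the spiralling cells ($B_i$), and the cells where the double-rotation formula degenerates to a single rotation ($C_i$). Second, the case analysis itself: nothing a priori bounds the number of combinatorial configurations of a tight three-interval ITM that must be inspected, and one must check that every non-degenerate one is $A$, $A'$, $B$, some $B_i$ or some $C_i$, with all leftover configurations confined to the hyperplanes above. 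Once the combinatorial classification is fixed, openness, denseness of $U$, and the rationality and invertibility of the inductions are routine.
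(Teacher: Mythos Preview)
Your core observation is correct and is in fact cleaner than what the paper does: in the main region $\mcM$ (tightness forcing $\gamma_3=-\beta_2$ and $\gamma_1=1-\beta_1$), inducing on $\Delta=\Delta_1\cup\Delta_2=[0,\beta_2)$ gives a double rotation because every excursion into $\Delta_3$ only subtracts multiples of $\beta_2$, which is invisible on $\bbR/\beta_2\bbZ$. This single induction, cut into countably many cells by the two winding numbers $\lfloor(1-\beta_1)/\beta_2\rfloor$ and $\lfloor\gamma_2/\beta_2\rfloor$, already yields an invertible rational map to $\Rot(2)$ on each cell, and that is all one needs for Theorem~\ref{t:main}.

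The paper, however, does \emph{not} organize the proof this way, and your attempt to match its labels $B,B_i,C_i$ is where the proposal goes wrong. The paper halves by the symmetry $|\Delta_1|\ge|\Delta_3|$ (not by the sign of $\gamma_2$), and then the sign of $\gamma_2$ selects the induction interval: for $\gamma_2<0$ (piece $B$, ``Type~1'') one induces on $\Delta_1\cup\Delta_2$ with return time $\le2$, landing in Case~$A$; for $\gamma_2>0$ (pieces $B_i,C_i$, ``Type~2'') one induces on $\Delta_2\cup\Delta_3$, and the index $i$ records how many times $T\Delta_3$ spirals through $\Delta_1$ before meeting $\Delta=\Delta_2\cup\Delta_3$. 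The split between the two families is whether $T^{i+1}\Delta_3$ lands entirely inside $\Delta$ (then $T_\Delta\in\ITM(2)$, hence a single rotation) or straddles $\beta_1$ and splits in two (then $T_\Delta$ is a genuine three-interval tight map in Case~$A'$, hence a double rotation). Both alternatives are \emph{open} conditions. Your description of $C_i$ as the locus where ``the two rotation-arcs carry equal rotations'' is therefore incorrect: that condition is a hyperplane, not an open piece, and it is not how the single-rotation outcome arises in the paper.

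In short: your argument proves a statement equivalent for all downstream purposes, via a more uniform induction than the paper's, but it does not reproduce the specific decomposition $A,A',B,B_i,C_i$ that the theorem asserts; if you want that decomposition you must use the paper's Type~2 interval $\Delta_2\cup\Delta_3$ and distinguish the ``returns in one piece'' versus ``splits'' alternative for $\Delta_3$, rather than a degeneracy of rotation parameters.
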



Because the inductions are local diffeomorphisms, the preimages of zero measure sets are zero measure sets. The Hausdorff dimension is also preserved. This allows to transfer the results of~\cite{Suzuki2005} and~\cite{Bruin2012} to $\TITM(3)$.

\section{Proof of Theorem~\ref{t:titm-d-r}}

Let~$T \in \TITM(3)$ with $\Om = [0,1)$. Without loss of generality, we can assume $\gamma_1 > 0$ and $\gamma_3 < 0$. Because $T$ is tight, some interval (not $\dd_1$) must go to the leftmost position, and some interval (not $\dd_3$) must go to the rightmost position. Obviously, there are $3$ cases:
$$
\begin{array}{lccc}
  & A & A' & B\&C \\
  Leftmost & \dd_2 & \dd_3 & \dd_3 \\
  Rightmost & \dd_1 & \dd_2 & \dd_1
\end{array}
$$
The cases $A$ and $A'$ are mirror images of each other, so we consider only case $A$ of these two.

\subsection{Case A. Double rotation in disguise}  \label{ss:d-r}

In this case, $\Delta_2$ goes to the leftmost position and $\Delta_1$ goes to the rightmost position, see Figure~\ref{f:d-r-disguise}.
%
%
Then $T$ is a double rotation with $c = \beta_2$ (i.e. the first arc is $\dd_1 \cup \dd_2$ and the second one is~$\dd_3$) and $a = -|\Delta_1|$, $b = \gamma_3$.

\begin{figure}[htp]
\begin{center}
\includegraphics[width=4in]{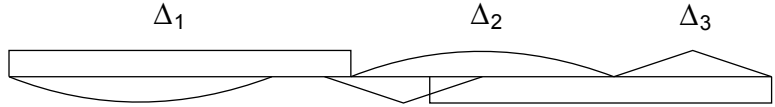}\\
\caption{Double rotation in disguise.}   \label{f:d-r-disguise}
\end{center}
\end{figure}

\subsection{Cases B and C. Induction}    \label{ss:induc}

In this case, $\Delta_1$ goes to the rightmost position and $\Delta_3$ goes to the leftmost position. Because of the symmetry, we can assume without loss of generality that $|\Delta_1| \ge |\Delta_3|$. Consider the two sub-cases: $\gamma_2 < 0$ and $\gamma_2 > 0$.

\subsubsection*{\underline{Sub-Case $\gamma_2 < 0$ [piece B]}}

\begin{proposition}
  In this case, $\Delta = \Delta_1 \cup \Delta_2$ is regular with the return time $\le 2$. $T_\Delta$ is a tight ITM of three intervals which is a double rotation.
\end{proposition}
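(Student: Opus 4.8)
The plan is to read off the constraints that tightness and the case hypotheses put on the parameters, verify regularity by an explicit orbit chase, and then compute the first‑return map on $\Delta=\Delta_1\cup\Delta_2$ and recognize it, after rescaling, as a double rotation. Throughout I write $\Delta_1=[0,\beta_1)$, $\Delta_2=[\beta_1,\beta_2)$, $\Delta_3=[\beta_2,1)$ with translations $\gamma_1>0$, $\gamma_2<0$, $\gamma_3<0$.

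\emph{Step 1 (normalize the parameters).} Tightness together with ``$\Delta_3$ leftmost, $\Delta_1$ rightmost'' forces the left end of $T\Delta_3$ to be $0$ and the right end of $T\Delta_1$ to be $1$, i.e. $\gamma_3=-\beta_2$ and $\gamma_1=1-\beta_1$; hence $T\Delta_1=[1-\beta_1,1)$ and $T\Delta_3=[0,1-\beta_2)$. From $T\Omega\subset\Omega$ and $\gamma_2<0$ I get $\beta_1+\gamma_2\ge 0$, i.e. $|\gamma_2|\le\beta_1$. The standing assumption $|\Delta_1|\ge|\Delta_3|$ reads $\beta_1\ge 1-\beta_2$, and since $\beta_1<\beta_2$ always, this yields $\beta_2>\tfrac12$ and $1-\beta_2\le\beta_1<\beta_2$. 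On the open piece $B$ all these inequalities are strict.

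\emph{Step 2 (regularity, return time $\le 2$).} I would chase orbits. For $x\in\Delta_2$, $Tx=x+\gamma_2\in[\beta_1+\gamma_2,\beta_2+\gamma_2)\subset[0,\beta_2)=\Delta$, return time $1$. For $x\in\Delta_1$, $Tx=x+1-\beta_1$: if $x<\beta_1+\beta_2-1$ then $Tx<\beta_2$, so $Tx\in\Delta$ (return time $1$); if $x\ge\beta_1+\beta_2-1$ then $Tx\in[\beta_2,1)=\Delta_3$, so $T^2x=Tx-\beta_2\in[0,1-\beta_2)\subset[0,\beta_2)=\Delta$ (return time $2$, using $1-\beta_2<\beta_2$). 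For $x\in\Delta_3$, $Tx=x-\beta_2\in[0,1-\beta_2)\subset\Delta$. Hence every point of $\Omega$ lands in $\Delta$ within two steps, $\Delta$ is regular, $T_\Delta$ is defined, and all first‑return times are $1$ or $2$.

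\emph{Step 3 (identify $T_\Delta$).} Split $\Delta$ into $J_1=[0,\beta_1+\beta_2-1)$, $J_2=[\beta_1+\beta_2-1,\beta_1)$, $J_3=\Delta_2=[\beta_1,\beta_2)$ (all nonempty on $B$). By Step 2, $T_\Delta$ is the translation by $1-\beta_1$ on $J_1$, by $1-\beta_1-\beta_2$ on $J_2$, and by $\gamma_2$ on $J_3$, so $T_\Delta\in\ITM(3)$. The point is that $J_1=\{x\in\Delta_1: x+(1-\beta_1)<\beta_2\}$ and $J_2=\{x\in\Delta_1: x+(1-\beta_1)\ge\beta_2\}$, so $T_\Delta$ acts on $\Delta_1$ precisely as the rotation $x\mapsto x+(1-\beta_1)\bmod\beta_2$ of the circle $\mathbb{R}/\beta_2\mathbb{Z}$; and on $\Delta_2$, since $\gamma_2\in[-\beta_1,0)\subset(-\beta_2,0)$, it acts as $x\mapsto x+\gamma_2\bmod\beta_2$. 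Rescaling $\Delta$ to $[0,1)$ (dividing by $\beta_2$) therefore turns $T_\Delta$ into the double rotation $f_{(a,b,c)}$ with $c=\beta_1/\beta_2$, $a=(1-\beta_1)/\beta_2$, $b=\{\gamma_2/\beta_2\}=(\beta_2+\gamma_2)/\beta_2$. Tightness is immediate: $\inf T_\Delta\Delta=\min\{1-\beta_1,\,0,\,\beta_1+\gamma_2\}=0$ (attained on $J_2$) and $\sup T_\Delta\Delta=\max\{\beta_2,\,1-\beta_2,\,\beta_2+\gamma_2\}=\beta_2$ (attained on $J_1$), so $[T_\Delta\Delta)=[0,\beta_2)=[\Delta)$. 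Finally, $(\beta_1,\beta_2,\gamma_2)\mapsto(a,b,c)$ is rational, and $a+c=1/\beta_2$ gives the rational inverse $\beta_2=1/(a+c)$, $\beta_1=c/(a+c)$, $\gamma_2=(b-1)/(a+c)$ (with $\gamma_1=1-\beta_1$, $\gamma_3=-\beta_2$), so the Type~1 induction is an invertible rational map on $B$.

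\emph{Where the difficulty sits.} The only genuine verification, as opposed to bookkeeping with half‑open intervals, is that the breakpoint $\beta_1+\beta_2-1$ in $\Delta_1$ separating return time $1$ from return time $2$ is exactly the wrap point of the rotation by $1-\beta_1$ on $\mathbb{R}/\beta_2\mathbb{Z}$, and that the second iterate then lands back inside $\Delta$. Both facts rest on $1-\beta_2\le\beta_1$ (equivalently $\beta_2>\tfrac12$), i.e. on the normalization $|\Delta_1|\ge|\Delta_3|$; without it $T^2x$ may escape $\Delta$, the return‑time bound fails, and $T_\Delta$ acquires extra branches, so one would no longer land on a double rotation.
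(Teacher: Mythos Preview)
Your proof is correct and follows essentially the same route as the paper: both split $\Delta_1$ at $\beta_2-\gamma_1=\beta_1+\beta_2-1$, observe that $\Delta_2$ and the left piece return in one step while the right piece passes through $\Delta_3$ and returns in two (using $|\Delta_1|\ge|\Delta_3|$), and conclude that the three-branch first-return map is a double rotation. The only difference is presentational: you compute the double-rotation parameters $(a,b,c)$ and the rational inverse explicitly, whereas the paper just notes that $T_\Delta$ lands in its Case~A and defers to Subsection~\ref{ss:d-r}.
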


\begin{figure}[htp]
\begin{center}
\includegraphics[width=4in]{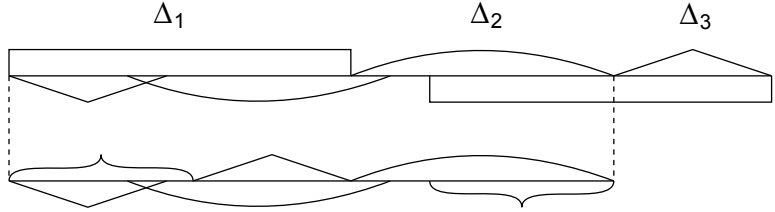}\\
\caption{Induction to $\Delta_1 \cup \Delta_2$.}   \label{f:i-left}
\end{center}
\end{figure}

\begin{proof}
  $\gamma_2 < 0$ implies $T\Delta_2 \subset \Delta$, so $\dd_2$ returns to $\dd$ in one piece after a single iteration of $T$. On the other hand, $\Delta_1$ is split by the first return map into two pieces, $\dd_1'$ and $\dd_1''$, separated by the point $\beta_2 - \gamma_1$, see Figure~\ref{f:i-left}. For the first piece,
  $$
    T \dd_1' = T\dd_1 \cap \dd \subset \dd.
  $$
  For the second piece, we have
  $$
    T \dd_1'' = T\dd_1 \setminus \dd = \dd_3, \quad \text{and} \quad T^2 \dd_1'' = T\dd_3 \subset \dd_1 \subset \dd.
  $$
  Here we used $|\dd_1| \ge |\dd_3|$. We have just shown that $\dd$ is regular. $T_\dd$ is a tight ITM of three intervals, $\dd_1', \dd_1'',\dd_2$. Note that $T_\dd$ sends $\dd_1'$ to the rightmost position and $\dd_1''$ to the leftmost one. Thus we have Case $A$ for $T_\dd$, and $T_\dd$ is a double rotation.

  Note that the induction operator $T \mapsto T_\dd$ is an invertible rational map on~$B$.
\end{proof}

\begin{remark}
  The sub-case $\gamma_2 > 0$ is a generalized Bruin-Troubetzkoy family~\cite{Bruin2003} with one extra degree of freedom, $\gamma_2$.
\end{remark}

\subsubsection*{\underline{Sub-Case $\gamma_2 > 0$ [pieces $B_i$, $C_i$]}}

\begin{proposition}
  In this case, $\Delta = \Delta_2 \cup \Delta_3$ is regular, and $T_\Delta$ is a tight ITM of three intervals which is a double rotation.
\end{proposition}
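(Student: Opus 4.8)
The plan is to imitate the treatment of the sub-case $\gamma_2<0$: first normalize the parameters, then prove regularity of $\Delta=\Delta_2\cup\Delta_3=[\beta_1,1)$ by tracing orbits, then compute the first-return map $T_\Delta$ explicitly, and finally read a double rotation off its translation data. Being in the ``$B\&C$'' case of a tight map with $\gamma_1>0$, $\gamma_2>0$, $\gamma_3<0$ forces: $\Delta_1$ to the rightmost slot gives $\gamma_1=1-\beta_1=|\Delta|$; $\Delta_3$ to the leftmost gives $\gamma_3=-\beta_2$; the running assumption $|\Delta_1|\ge|\Delta_3|$ reads $\beta_1\ge 1-\beta_2$; and $T(\Omega)\subset[0,1)$ together with $\gamma_2>0$ forces $0<\gamma_2\le 1-\beta_2<\gamma_1$. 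For regularity: if $x\in\Delta_2$ then $Tx=x+\gamma_2\in(\beta_1,1)\subset\Delta$, a return in one step; if $x\in\Delta_3$ then $Tx=x-\beta_2\in[0,1-\beta_2)\subset[0,\beta_1)=\Delta_1$ (here $|\Delta_3|\le|\Delta_1|$ is used), and on $\Delta_1$ the map acts as $y\mapsto y+\gamma_1$ with $\gamma_1>0$, so any orbit re-enters $[\beta_1,1)=\Delta$ after at most $\lceil\beta_1/\gamma_1\rceil$ further steps; the same bound covers orbits started inside $\Delta_1$. Hence $\Delta$ is regular. Write $k(x)\ge 1$ for the number of $+\gamma_1$ iterations needed from $x\in\Delta_3$, so that the return time is $1+k(x)$ and $T_\Delta x=(x-\beta_2)+k(x)\gamma_1$ on $\Delta_3$, while $T_\Delta x=x+\gamma_2$ on $\Delta_2$.

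Next I would analyze $k$ on $\Delta_3$. One has $k(x)=\max\{1,\lceil(\beta_1+\beta_2-x)/\gamma_1\rceil\}$; as $x$ runs over $[\beta_2,1)$ the quantity $(\beta_1+\beta_2-x)/\gamma_1$ sweeps a half-open interval of length $(1-\beta_2)/\gamma_1=|\Delta_3|/|\Delta|\le 1$, so $k$ takes at most two consecutive values $i,i+1$, larger to the left. In the principal case both occur and $\Delta_3$ splits at $x^{*}=\beta_1+\beta_2-i\gamma_1$ into $\Delta_3'=[\beta_2,x^{*})$ with $k\equiv i+1$ and $\Delta_3''=[x^{*},1)$ with $k\equiv i$, so $T_\Delta$ is an ITM of the three intervals $\Delta_2,\Delta_3',\Delta_3''$ with translation numbers $\gamma_2$, $-\beta_2+(i+1)\gamma_1$, $-\beta_2+i\gamma_1$ respectively. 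Substituting $x^{*}$ I would check that $T_\Delta\Delta_3''=[\beta_1,\,1-\beta_2+i\gamma_1)$ sits at the left end, $T_\Delta\Delta_3'=[(i+1)\gamma_1,\,1)$ at the right end, and $T_\Delta\Delta_2=[\beta_1+\gamma_2,\,\beta_2+\gamma_2)$ strictly between, so $T_\Delta$ is a tight ITM of three intervals; this is the piece $B_i$.

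The crux is then a one-line observation: the pieces $\Delta_3'$ and $\Delta_3''$ carry translation numbers differing by exactly $\gamma_1=|\Delta|$. Regarding $\Delta$ as a circle of circumference $\gamma_1$, this means $T_\Delta$ rotates the whole arc $\Delta_3'\cup\Delta_3''=[\beta_2,1)$ by the single amount $-\beta_2+i\gamma_1\pmod{\gamma_1}$, while it rotates the complementary arc $\Delta_2=[\beta_1,\beta_2)$ by $\gamma_2\pmod{\gamma_1}$; rescaling $\Delta$ to $[0,1)$ exhibits $\tilde T_\Delta$ as the double rotation $f_{(a,b,c)}$ with $c=(\beta_2-\beta_1)/\gamma_1$, $a=\gamma_2/\gamma_1$, $b=\{\,i-\beta_2/\gamma_1\,\}$. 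On the cell $B_i$ — fixed $i$, together with the fixed $\argmin/\argmax$ combinatorics feeding the fitting operator — all of the above quantities are affine or rational in $(\beta_1,\beta_2,\gamma_2)$ and can be solved for the inverse explicitly, so $T\mapsto\tilde T_\Delta$ is an invertible rational map, as needed for Theorem~\ref{t:titm-d-r} (cf. Proposition~\ref{p:rank}). The remaining configuration, in which $k\equiv i$ is constant on $\Delta_3$, is treated by the same computation: there $T_\Delta$ is a two-interval ITM, and its fitting is a single rotation (any tight ITM of two intervals is a rotation) — this is the piece $C_i$.

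I expect the only real work to be the return-time bookkeeping — pinning down the two values of $k$, the cut point $x^{*}$, and the three image intervals, and keeping careful track of the degenerate and boundary configurations (the clamp $k\ge 1$, the cut point $x^{*}$ colliding with an endpoint of $\Delta_3$, an image touching $0$ or $1$, or $\gamma_2$ attaining its bound $1-\beta_2$), since exactly these degeneracies delimit the pieces $B_i$, $C_i$ and account for the countable union of hyperplanes removed in Theorem~\ref{t:titm-d-r}. Everything after the ``translations differ by $|\Delta|$'' remark is routine.
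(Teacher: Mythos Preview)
Your proof is correct and follows essentially the same route as the paper: establish regularity of $\Delta=\Delta_2\cup\Delta_3$, track the orbit of $\Delta_3$ through $\Delta_1$ by repeated $+\gamma_1$ steps until it returns, and split into the two sub-cases according to whether $\Delta_3$ comes back in one piece or is cut by $\beta_1$. The only cosmetic differences are that (i) your labels $B_i$ and $C_i$ are swapped relative to the paper's proof (though your labeling agrees with the statement of Theorem~\ref{t:titm-d-r}, so the paper is internally inconsistent on this point), and (ii) in the split case you read off the double rotation directly from the fact that the two return translations on $\Delta_3',\Delta_3''$ differ by exactly $|\Delta|=\gamma_1$, whereas the paper simply observes that $T_\Delta$ lands in Case~$A'$ (mirror of Subsection~\ref{ss:d-r}) and quotes that case.
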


\begin{proof}
  Similarly, $\gamma_2 > 0$ implies $T\Delta_2 \subset \Delta$, so $\dd_2$ returns to $\dd$ in one piece after a single iteration of $T$.
  Let us now observe the return of $\dd_3$.

  We know $T \dd_3$ is at the leftmost position, which implies $T\dd_3 \subset \dd_1$. Thus $T^2\dd_3 = T\dd_3 + \gamma_1$. Moreover, for any $n \in \bbN$ such that $T^n \dd_3 \subset \dd_1$, we have $T^n\dd_3 = T\dd_3 + (n-1)\gamma_1$. Let $n$ be the maximal $n$ such that $T^2\dd_3, \dots, T^n \dd_3 \subset \dd_1$. There are two possibilities:
  \begin{enumerate}
    \item\label{case:b} $T^{n+1} \dd_3 \subset \dd$, or
    \item\label{case:c} $T^{n+1}\dd_3 \cap \dd \ne \emptyset$, $T^{n+1}\dd_3 \not\subset \dd$.
  \end{enumerate}

  Possibility~\ref{case:b} corresponds to~$B_i$, $i = n$. In this case, $\dd$ is obviously regular, and $T_\dd \in \ITM(2)$, see Figure~\ref{f:i-right-1}. By~\cite{Boshernitzan1995}, $T_\dd$ reduces to a (single) rotation.
  
  \begin{figure}[htp]
    \begin{center}
    \includegraphics[width=4in]{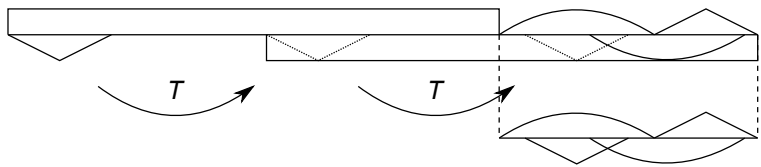}\\
    \caption{Induction to $\Delta_2 \cup \Delta_3$: possibility~\ref{case:b}.}   \label{f:i-right-1}
    \end{center}
  \end{figure}

  Possibility~\ref{case:c} corresponds to~$C_i$, $i = n$. In this case, $\dd_3$ is split into two pieces $\dd_3'$ and $\dd_3''$. $\dd_3''$ returns to $\dd$ after $n+1$ iterations of $T$ and goes to the leftmost position, see Figure~\ref{f:i-right-2}. $\dd_3'$ returns to $\dd$ after $n+2$ iterations and goes to the rightmost position. Thus we have Case $A'$ for $T_\dd$, and $T_\dd$ is a double rotation.

  \begin{figure}[htp]
    \begin{center}
    \includegraphics[width=4in]{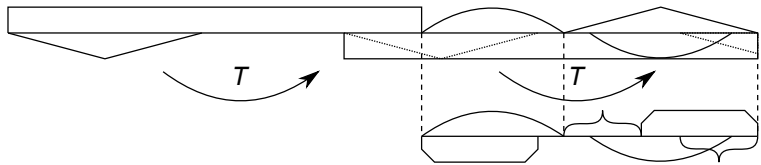}\\
    \caption{Induction to $\Delta_2 \cup \Delta_3$: possibility~\ref{case:c}.}   \label{f:i-right-2}
    \end{center}
  \end{figure}

  Note that the induction operator $T \mapsto T_\dd$ is an invertible rational map on each~$B_i$ or $C_i$.
\end{proof}

\section{Proof of Theorem~\ref{t:main}}

Note that (apart from a few trivial exceptions) for any $T \in \ITM(3)$ the limit set $X$ is either entirely periodic or transitive. All the infinite type ITMs belong to the latter case. Also for $\ITM(3)$ we can ignore the case~\eqref{case:reduc}, because in this case the ITM is just a rotation, and cannot be infinite type. Recall that by Lemma~\ref{l:iff-finite}, the properties to be finite or infinite type are preserved by inductions.

Now we invoke Theorem~\ref{t:bruin-clack} by Bruin and Clack~\cite{Bruin2012}. Together with our Theorem~\ref{t:titm-d-r} it implies that the set $\mathcal{S} \cap \TITM(3)$ is a countable union of sets of zero Lebesgue measure. \cite{Bruin2012} also numerically estimated the Hausdorff dimension of the infinite type parameters to be between $2$ and $2.88$. Thus,
$$
\Leb(\mathcal{S} \cap \TITM(3)) = 0, \quad 2 \le \dim(\mathcal{S} \cap \TITM(3)) \le 2.88.
$$
By Proposition~\ref{p:rank}, the set $\mathcal{S} \cap \ITM(3) = \mathcal{F}^{-1} (\mathcal{S} \cap \TITM(3))$ has zero Lebesgue measure and Hausdorff dimension between $4$ and $4.88$. The main theorem is proven.

\section*{Acknowledgements}

The author is grateful to Professor A.~Gorodetski for many fruitful discussions, to University of California, Irvine, for hospitality during the initial work on this paper, and to anonymous referees whose comments helped to improve the paper. The graphical representations of ITMs are inspired by the figures of interval identifications by A.~Skripchenko.

\end{document}